\newtheorem{teo}{Theorem}[section]
\newtheorem{prop}{Proposition}[section]
\newtheorem{defi}{Definition}[section]
\newtheorem{cor}{Corollary}[section]
\newtheorem{exe}{Exemple}[section]
\newtheorem{obs}{Remark}[section]
\newcommand{\F}{{\mathcal{F}}}
\begin{document}

\newtheorem{theorem}{Theorem}
\newtheorem{proposition}[theorem]{Proposition}
\newtheorem{lemma}[theorem]{Lemma}

\newtheorem{definition}{Definition}

\title{Poincaré problem for divisors
invariant by one-dimensional foliations on smooth algebraic variety
}
\author{
Maurício Corrêa JR\\
\small{Departamento de Matem\'atica}\\
\small{Universidade Federal de Minas Gerais}\\
\small{30123-970 Belo Horizonte - MG, Brasil}\\
\small{\texttt{mauriciojr@ufmg.br}}}
\date{\today}
\maketitle
\begin{abstract}
In this paper we consider the question of bounding the degree of an
divisor $D$ invariant by a $\F$ holomorphic foliation, without
rational first integral, on smooth algebraic variety $X$ in terms of
degree of $\F$ and some invariants of $D$ and $X$. Particularly, if
$\F$ is a foliation of degree $d$ on $\mathbb{P}_{\mathbb{C}}^2$,
whose the number of invariants curves is greater that ${k+2\choose
k}$, we show that there exist a number $\mathcal{M}(d,k)$ such that
if $k>\mathcal{M}(d,k),$ then $\F$ admits a rational first integral
of degree $\leq k$. Moreover, there exist a number
$\mathscr{G}(d,k)$, such that if $\F$ has an algebraic solution of
degree $k$ and genus smaller than
 $\mathscr{G}(d,k)$, then it has a rational first integral of
degree $\leq k$.
\end{abstract}

\section{Introduction}
Henri Poincaré studied in \cite{HP} the problem which, in the modern
terminology, says: \emph{"Is it possible to decide if a holomorphic
foliation $\F$ on the complex projective plane
$\mathbb{P}_{\mathbb{C}}^2$ admits a rational first integral ?"}
Poincaré observed that in order to solve this problem is sufficient
to find a bound for the degree of the generic leaf of $\F$. In
general, this is not possible, but doing some hypothesis we obtain
an affirmative answer for this problem, which nowadays is known as
$\emph{Poincaré Problem}$. Many mathematicians come treating this
problem and some of its generalizations, see for instance the papers
of  Cerveau $\&$ Lins Neto \cite{CN}, Carnicer \cite{C}, Soares
\cite{S}, Brunella $\&$ Mendes \cite{B-M}, Esteves $\&$ Kleiman
\cite{E-K}, V. Cavalier $\&$ D. Lehmann \cite{C-L}  and Zamora
\cite{Z}.

Other researcher that treated this type of problem was P. Painlevé,
more or less at the same time of Poincaré problem, which in
\cite{PP} asked the following question: \emph{"Is it possible to
recognize the genus of the general solution of an algebraic
differential equation in two variables which has a rational first
integral?}" In \cite{N} Lins Neto  has constructed families of
foliations with fixed degree and local analytic type of the
singularities where foliations with rational first integral of
arbitrarily large degree appear. Therefore this families  show that
Poincaré and Painlevé questions have a negative answer. In the same
paper Lins Neto raised the the following question: \emph{"Given
$d\geq2$, is there $M(d)\in \mathbb{N}$,such that if a foliation of
degree d has an algebraic solution of degree greater than or equal
to $M(d)$, then it has a rational first integral?"} J.Moulin
Ollagnier showed in \cite{O} that whend $d=2$ this question has a
negative answer, he exhibited a countable family of Lotka-Volterra
foliations given by
$$
SLV(\ell)=x(y/2 + z)\frac{\partial}{\partial x}+y(2z +
x)\frac{\partial}{\partial
y}+z\left(y-\frac{2\ell+1}{2\ell-1}x\right)\frac{\partial}{\partial
z}
$$
without rational first integrals such that has an irreducible
algebraic solution of degree $2\ell$.

Let $\F$ be a foliation on $\mathbb{P}_{\mathbb{C}}^n$ of degree
$d\geq2$ and $\mathcal{V}$ a hypersurface $\F$-invariante of degree
$k$. In this paper, using the \emph{extatic divisor}, we show that
if  the number of invariants hypersurfaces, of degree $k$, is
greater than $ {n+k\choose k}$ then there exist a number
$\mathcal{M}(d,k)$ such that if
$$k>\mathcal{M}(d,k),$$ then $\F$ admits a rational first
integral, see corollary \ref{pn}.

We raise the following question: \emph{"Given $d\geq2$, is there
$\mathscr{G}(d,k)$, such that if a foliation of degree $d$ has an
algebraic solution of degree $k$ and genus  smaller than to
$\mathscr{G}(d,k)$, then it has a rational first integral?"} We will
show that this question has a positive answer, see theorem \ref{gen}
.

If $\F$ is a holomorphic one-dimensional foliation on  algebraic
variety $X$, then a rational first integral for $\F$ is a rational
map $\Theta:X \dashrightarrow Y$, where $Y$ is an algebraic variety,
such that the fibers of $\Theta$ are $\F$-invariant.
Using a concept of degree of foliations and divisors  we will proof
the following result.

\begin{theorem}\label{teo}
Let $\F$ be a one-dimensional foliation on smooth algebraic variety
$X$ and $D$ a effective divisor $\F$-invariant. Suppose that $\F$
does not admit rational first integral, then:

$$deg(D)\cdot\left[\mathscr{N}(\F,|D|)-h^0(X,\mathscr{O}(D))\right]\leq
[deg(\F)-deg(X)]\cdot\displaystyle{h^0(X,\mathscr{O}(D)) \choose
2},$$
 where $\mathscr{N}(\F,|D|)$  is the number  of divisors $\F$-invariante
contained on the  linear system $H^0(X,\mathscr{O}(D))$ and
$h^0(X,\mathscr{O}(D))=dim_{\mathbb{C}}H^0(X,\mathscr{O}(D))$.
\end{theorem}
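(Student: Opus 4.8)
The plan is to run the \emph{extactic-divisor} argument for the complete linear system $|D|$. Put $\ell=h^0(X,\mathscr{O}(D))$ and fix a basis $s_1,\dots,s_\ell$ of $H^0(X,\mathscr{O}(D))$. Let $v$ be a local vector field generating $\F$; it acts as a derivation which, on sections of $\mathscr{O}(D)$, lands in $\mathscr{O}(D)\otimes N_\F$, where $N_\F$ is the normal sheaf of the foliation. I would then define the extactic section
$$\mathcal{E}_{|D|}=\det\bigl(v^{\,i-1}(s_j)\bigr)_{1\le i,j\le \ell}.$$
Tracking twists row by row, the $i$-th row carries $i-1$ applications of $v$ and hence $i-1$ copies of $N_\F$; since $\sum_{i=1}^{\ell}(i-1)=\binom{\ell}{2}$, the determinant is a global section of $\mathscr{O}(\ell D)\otimes N_\F^{\otimes\binom{\ell}{2}}$.

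First I would record the two properties of the extactic that drive everything. (i) If $C=\{s=0\}\in|D|$ is $\F$-invariant, then invariance means $v(s)=g\,s$ for a suitable $g$, so by induction $v^{k}(s)\in(s)$ for every $k\ge 1$. Thus in the column of $s=s_j$ every entry $v^{i-1}(s_j)$ is a multiple of $s$, and factoring $s$ out of that column shows $s\mid\mathcal{E}_{|D|}$; equivalently $C$ is a component of the extactic divisor. (ii) $\mathcal{E}_{|D|}\equiv 0$ if and only if the ratios $s_j/s_1$ are first integrals of $\F$, i.e.\ $\F$ admits a rational first integral whose fibres lie in $|D|$. The model case $\ell=2$ is transparent: $\mathcal{E}=s_1v(s_2)-s_2v(s_1)$ vanishes identically exactly when $v(s_2/s_1)=0$.

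Now I would invoke the hypothesis. Since $\F$ has no rational first integral, property (ii) forces $\mathcal{E}_{|D|}\not\equiv 0$, so its zero locus is an effective divisor whose degree is
$$\deg\bigl(\mathcal{E}_{|D|}\bigr)=\ell\cdot\deg(D)+\binom{\ell}{2}\cdot\deg(N_\F).$$
With the paper's conventions for the degree of a one-dimensional foliation and of $X$ one has $\deg(N_\F)=\deg(\F)-\deg(X)$. By property (i) each of the $\mathscr{N}(\F,|D|)$ distinct $\F$-invariant members of $|D|$ is a component of the extactic divisor, and each has degree $\deg(D)$; summing their contributions inside $\operatorname{div}(\mathcal{E}_{|D|})$ gives
$$\mathscr{N}(\F,|D|)\cdot\deg(D)\le \ell\cdot\deg(D)+\binom{\ell}{2}\bigl(\deg(\F)-\deg(X)\bigr).$$
Transposing the term $\ell\cdot\deg(D)$ and recalling $\ell=h^0(X,\mathscr{O}(D))$ yields precisely the asserted inequality.

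The hard part will be the passage from ``each invariant member divides $\mathcal{E}_{|D|}$'' to ``the sum of all $\mathscr{N}$ invariant members is dominated by $\operatorname{div}(\mathcal{E}_{|D|})$''. Two distinct members of $|D|$ may share the fixed locus of the linear system, so one cannot simply multiply the defining sections and read off coprime factors; the honest statement requires a local computation of the order of vanishing of $\mathcal{E}_{|D|}$ along each invariant component, so that the component-wise contributions genuinely add inside the extactic divisor. A secondary, more routine point is to verify in the generality of an arbitrary smooth $X$ both the identity $\deg(N_\F)=\deg(\F)-\deg(X)$ for the chosen degree conventions and the sharp form of the vanishing criterion (ii), since it is exactly this equivalence that later converts the numerical overflow $\mathscr{N}>\ell$, combined with the degree bound, into the existence of a rational first integral.
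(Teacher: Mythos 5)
Your proposal is correct in substance and follows essentially the same route as the paper: it is Pereira's extactic-divisor argument, with the Wronskian determinant $\det\bigl(v^{i-1}(s_j)\bigr)$ built directly in place of the paper's jet-bundle cocycle computation (both yield the same line bundle $[D]^{\otimes k}\otimes (T_{\F}^*)^{\otimes\binom{k}{2}}$ up to the trivial factor $(\bigwedge^k V)^*$), the same two key properties (which the paper simply cites as Pereira's Proposition 5 and Theorem 3), the same counting of invariant members inside the extactic divisor (whose multiplicity subtlety you flag and the paper handles by grouping irreducible invariant components with multiplicities), and the same final degree count. One cosmetic slip: the twisting line bundle you call the normal sheaf $N_{\F}$ is really the paper's cotangent bundle $T_{\F}^*$ of the foliation (the normal sheaf has rank $n-1$); since you only use its degree $\deg(\F)-\deg(X)$, which is exactly the paper's Proposition 2.1, nothing else in your argument changes.
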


\begin{cor}\label{poin}
Let $\F$ be a one-dimensional foliation on algebraic variety $X$ and
$D$ a effective divisor $\F$-invariant. Suppose that $\F$ does not
admit rational first integral and that $\mathscr{N}(\F,|D|)>
h^0(X,\mathscr{O}(D))$, then:

$$deg(D)\leq\frac{
[deg(\F)-deg(X)]\cdot\displaystyle{h^0(X,\mathscr{O}(D)) \choose
2}}{\mathscr{N}(\F,|D|)-h^0(X,\mathscr{O}(D))},$$
 where $\mathscr{N}(\F,|D|)$  is the number  of divisors $\F$-invariante
contained on the  linear system $H^0(X,\mathscr{O}(D))$ and
$h^0(X,\mathscr{O}(D))=dim_{\mathbb{C}}H^0(X,\mathscr{O}(D))$.
\end{cor}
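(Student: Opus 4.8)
The plan is to derive Corollary \ref{poin} directly from Theorem \ref{teo}, so the entire argument reduces to observing that the extra hypothesis $\mathscr{N}(\F,|D|)>h^0(X,\mathscr{O}(D))$ is exactly what is needed to divide the inequality of the theorem without changing its direction. First I would apply Theorem \ref{teo}, whose hypotheses (that $\F$ is a one-dimensional foliation on $X$, that $D$ is an effective $\F$-invariant divisor, and that $\F$ admits no rational first integral) are all assumed here as well. This yields
$$\deg(D)\cdot\left[\mathscr{N}(\F,|D|)-h^0(X,\mathscr{O}(D))\right]\leq [\deg(\F)-\deg(X)]\cdot\displaystyle{h^0(X,\mathscr{O}(D)) \choose 2}.$$

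Next I would invoke the standing assumption $\mathscr{N}(\F,|D|)>h^0(X,\mathscr{O}(D))$. Since both $\mathscr{N}(\F,|D|)$ and $h^0(X,\mathscr{O}(D))$ are nonnegative integers, their difference $\mathscr{N}(\F,|D|)-h^0(X,\mathscr{O}(D))$ is a strictly positive integer; in particular it is nonzero, so division by it is legitimate and, being positive, it preserves the sense of the inequality. Dividing both sides of the displayed estimate by this quantity gives precisely
$$\deg(D)\leq\frac{[\deg(\F)-\deg(X)]\cdot\displaystyle{h^0(X,\mathscr{O}(D)) \choose 2}}{\mathscr{N}(\F,|D|)-h^0(X,\mathscr{O}(D))},$$
which is the asserted bound.

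There is essentially no obstacle beyond Theorem \ref{teo} itself: all of the substantive content — the extatic-divisor estimate that controls how many invariant members a linear system can contain when there is no rational first integral — is already packaged into the theorem's inequality. The only thing the corollary contributes is the positivity condition $\mathscr{N}(\F,|D|)>h^0(X,\mathscr{O}(D))$, which is what makes the denominator well-defined and positive and thereby licenses the rearrangement. Hence the ``hard part'' lies entirely upstream in the proof of Theorem \ref{teo}, and at this stage one need only verify that dividing by a strictly positive integer is valid, which the hypothesis guarantees. \bl
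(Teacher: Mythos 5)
Your proposal is correct and matches the paper's (implicit) reasoning exactly: the paper offers no separate proof of Corollary \ref{poin}, treating it as an immediate consequence of Theorem \ref{teo}, obtained precisely by dividing the theorem's inequality by the strictly positive quantity $\mathscr{N}(\F,|D|)-h^0(X,\mathscr{O}(D))$ guaranteed by the extra hypothesis. Nothing further is needed.
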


\begin{exe}
Let $X$ be a Abelian varietie of dimension $n$ and $D$ a effective
divisor invariant by a holomorphic foliation $\F$ on $X$. If
$\mathscr{N}(\F,|D|)> \frac{D^n}{n!}$ and  $\F$ does not admit a
rational first integral then
$$deg(D)\leq\frac{
[deg(\F)-deg(X)]\cdot\displaystyle{\frac{D^n}{n!} \choose
2}}{\mathscr{N}(\F,|D|)-\frac{D^n}{n!}}.$$
 Indeed, follows
from Kodaira-Nakano Vanishing Theorem that
$$h^p(X,\mathscr{O}(D))=h^p(X,\Omega^n(D))=0, \ \ \ \ \ p>0,$$
and hence the holomorphic Euler characteristic
$\chi(D)=h^0(X,\mathscr{O}(D))$. On the other hand, we have by
Riemann-Roch theorem that $\chi(D)=\frac{D^n}{n!}$. Now, the
affirmed follows from corollary  \ref{poin}.
\end{exe}

If we suppose that $\mathscr{N}(\F,|D|)> h^0(X,\mathscr{O}(D))$,
follows from corollary \ref{poin} that there exist a number
$\mathscr{M}(\F,|D|)$, such that if $\F$ possess a invariant
effective  divisor $D$, contained on the linear system
$|D|=H^0(X,\mathscr{O}(D))$, satisfying  the  condition
$$
deg(D)>\mathscr{M}(\F,|D|),
$$
then $\F$ admit rational first integral.

\begin{cor}\label{pn}
Let $\F$ be a one-dimensional foliation on
$\mathbb{P}_{\mathbb{C}}^n$ of degree $d\geq 2$ and
$\mathscr{N}(d,k)$ the number of hypersurfaces invariants by $\F$ of
degree $k$. Suppose that $\mathscr{N}(d,k)>{n+k\choose k}$ and there
exist a hypersurface invariant by $\F$ of degree $k$ such that
$$k>
\dfrac{(d-1)\cdot\displaystyle{{n+k\choose k} \choose
2}}{\mathscr{N}(d,k)-{n+k\choose k}}.$$ Then $\F$ admit a rational
first integral.

\end{cor}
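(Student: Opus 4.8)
The plan is to specialize Corollary \ref{poin} to the case $X=\mathbb{P}_{\mathbb{C}}^n$ and argue by contraposition. So I would begin by assuming, toward a contradiction, that $\F$ does \emph{not} admit a rational first integral. By hypothesis $\mathscr{N}(d,k)>{n+k\choose k}$, and there is a degree-$k$ invariant hypersurface; taking $D$ to be this hypersurface, all the hypotheses of Corollary \ref{poin} are in force with this particular effective $\F$-invariant divisor.

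The next step is to evaluate each quantity in the bound of Corollary \ref{poin} for $X=\mathbb{P}_{\mathbb{C}}^n$. An invariant hypersurface of degree $k$ is the zero locus of a section of $\mathscr{O}(k)$, so $\deg(D)=k$ and the relevant complete linear system is $H^0(\mathbb{P}^n,\mathscr{O}(k))$, whose dimension equals the number of degree-$k$ monomials in $n+1$ variables, namely $h^0(\mathbb{P}^n,\mathscr{O}(k))={n+k\choose k}$. Under this identification the quantity $\mathscr{N}(\F,|D|)$ is precisely $\mathscr{N}(d,k)$, the number of degree-$k$ $\F$-invariant hypersurfaces. Finally I would insert the degree normalizations $\deg(\F)=d$ and $\deg(\mathbb{P}^n)=1$, so that the bracketed factor $\deg(\F)-\deg(X)$ collapses to $d-1$.

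Substituting these values into the inequality of Corollary \ref{poin} gives
$$k\;\leq\;\frac{(d-1)\cdot\displaystyle{{n+k\choose k}\choose 2}}{\mathscr{N}(d,k)-{n+k\choose k}},$$
which directly contradicts the standing assumption that $k$ is strictly greater than the right-hand side. Hence $\F$ cannot fail to have a rational first integral, and the corollary follows.

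Since this is essentially a direct specialization, no step is genuinely hard; the only point demanding care is the degree bookkeeping, i.e.\ checking that the conventions under which Theorem \ref{teo} was established indeed give $\deg(\mathbb{P}_{\mathbb{C}}^n)=1$ and $\deg(\F)=d$, so that $\deg(\F)-\deg(X)$ reduces correctly to $d-1$. Once that is confirmed, the cohomological identity $h^0(\mathbb{P}^n,\mathscr{O}(k))={n+k\choose k}$ is standard and the conclusion is immediate.
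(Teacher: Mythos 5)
Your proof is correct and matches the paper's intended derivation: Corollary \ref{pn} is stated in the paper as a direct consequence of Corollary \ref{poin}, obtained exactly as you do by specializing to $X=\mathbb{P}_{\mathbb{C}}^n$ (where $h^0(\mathbb{P}^n,\mathscr{O}(k))={n+k\choose k}$, $\deg(D)=k$, $\deg(\F)-\deg(\mathbb{P}^n)=d-1$, and $\mathscr{N}(\F,|D|)=\mathscr{N}(d,k)$) and taking the contrapositive. Your attention to the degree normalization $\deg(\F)=d\cdot\deg(\mathbb{P}^n)=d$ is precisely the bookkeeping the paper records in the example following Proposition \ref{teorema}.
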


\begin{obs}
In the case of a foliation  $\F$  on $\mathbb{P}_{\mathbb{C}}^2$, we
can to bound of degree of the rational first integral which this one
admit. See the Proposition 2 in \cite{V}.
\end{obs}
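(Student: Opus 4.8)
The plan is to start from the rational first integral produced by Corollary \ref{pn} and to bound its degree by combining the extactic machinery of the paper with the classical tangency count on $\mathbb{P}^2$. Write the first integral as $\Theta = P/Q : \mathbb{P}^2 \dashrightarrow \mathbb{P}^1$, chosen primitive, so that the generic fibre $\{\alpha P + \beta Q = 0\}$ is an irreducible $\F$-invariant curve whose degree $m = \deg(P) = \deg(Q)$ is the degree of the first integral. The object to bound is $m$.

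First I would recall the standard dichotomy for the extactic divisor: for the linear system $V_j = H^0(\mathbb{P}^2, \mathscr{O}(j))$ the associated extactic curve $E_{V_j}(\F)$, the Wronskian-type determinant in the successive derivatives $\F^i$ applied to a basis of $V_j$, vanishes identically exactly when $\F$ possesses a rational first integral whose fibres lie in $|\mathscr{O}(j)|$, i.e.\ a first integral of degree $\leq j$. Hence $m$ is the least integer $j$ with $E_{V_j}(\F)\equiv 0$, and $\deg(E_{V_j}(\F))$ is an explicit polynomial in $j$ and $d = \deg(\F)$. This reduces the bound on $m$ to controlling when this determinant is forced to vanish, which is the role of the hypotheses of Corollary \ref{pn}.

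Next I would relate $m$ to the data $(k,\mathscr{N}(d,k))$ of Corollary \ref{pn}. Since $\Theta$ is primitive, every irreducible $\F$-invariant curve is a component of some fibre $\{\alpha P+\beta Q=0\}$, hence has degree at most $m$, while all but finitely many fibres are irreducible of degree exactly $m$. Therefore the abundance $\mathscr{N}(d,k) > \binom{k+2}{2}$ of invariant curves of degree $k$ cannot be absorbed by the finitely many special fibres, so the generic ones among them must be full fibres; this pins $m$ to $k$ and identifies the remaining degree-$k$ curves with components of the special fibres. The quantitative step is the tangency count: $\deg(\F)$ is the number of tangencies of $\F$ with a generic line $\ell$, and $\Theta|_\ell : \mathbb{P}^1 \to \mathbb{P}^1$ has degree $m$, so Riemann--Hurwitz bounds these tangencies by $2m-2$ corrected by the base points of the pencil on $\ell$ and by the ramification over the special values, giving the desired relation between $d$ and $m$.

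The main obstacle is genuine: for fixed $d$ the degree $m$ is unbounded in general --- this is exactly the Lins Neto and Moulin Ollagnier phenomenon recalled in the introduction --- so no bound can hold without the extra input of Corollary \ref{pn}. The crux is therefore to convert the inequality on $k$ in Corollary \ref{pn} into a bound on the number and the multiplicities of the special fibres of the pencil, equivalently on the genus of the generic fibre via adjunction; once the genus and the special fibres are controlled in terms of $(d,k,\mathscr{N}(d,k))$, the Riemann--Hurwitz relation returns an explicit upper bound for $m$. This last passage is precisely the content of Proposition 2 of \cite{V}, which I would invoke to close the argument.
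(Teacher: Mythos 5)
This remark carries no proof in the paper at all: its entire content is the citation of Proposition 2 of \cite{V}, so there is nothing internal to reconstruct. The route the paper implicitly relies on is short: under the hypotheses of Corollary \ref{pn} the inequality of Theorem \ref{teo} fails, and since that inequality was derived assuming only that the extactic section is nonzero (via Proposition \ref{prop}), the section $\varepsilon_{_{(\F,V)}}$ for $V=H^{0}(\mathbb{P}^2_{\mathbb{C}},\mathcal{O}(k))$ must vanish identically; Proposition 2 of \cite{V} (the $\mathbb{P}^2$ refinement of Theorem \ref{jorge}) then yields a rational first integral whose fibres lie in $|\mathcal{O}(k)|$, hence of degree $\leq k$. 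Your opening paragraph --- the minimal degree $m$ of a first integral is the least $j$ with $E_{V_j}(\F)\equiv 0$, and the extactic degree is an explicit polynomial in $j$ and $d$ --- is exactly this mechanism, and your closing appeal to \cite{V} is in effect the same citation the paper makes. Had you stopped there, the proposal would match the paper.

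The middle of your argument, however, contains two genuine gaps. First, the Riemann--Hurwitz count points the wrong way: the $d$ tangencies of $\F$ with a generic line $\ell$ sit among the $2m-2$ critical points of $\Theta|_{\ell}$, so the count gives $d\leq 2m-2$, i.e.\ a \emph{lower} bound for $m$; extracting an upper bound requires controlling the correction terms coming from multiple fibres and base points, and that is precisely the quantity which is unbounded in the Lins Neto and Moulin Ollagnier families you yourself recall. Nothing in Corollary \ref{pn} converts its numerical hypothesis into a bound on fibre multiplicities or genus, and Proposition 2 of \cite{V} is not such a statement either --- it is the extactic characterization of the minimal degree of a first integral, so your final invocation of it misattributes its content and leaves the Riemann--Hurwitz route unclosed. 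Second, the step ``the abundance $\mathscr{N}(d,k)>\binom{k+2}{2}$ cannot be absorbed by the finitely many special fibres, so $m$ is pinned to $k$'' is a non sequitur: $\mathscr{N}(d,k)$ is a \emph{finite} number, and finitely many invariant curves can perfectly well all be components of special fibres, so $m=k$ does not follow. It is also stronger than what is true or needed: the correct conclusion is $m\leq k$, and it falls out directly from the identical vanishing of the degree-$k$ extactic, with no fibre bookkeeping at all.
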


Let $X$ be  an algebraic surface and $D$ a curve invariant by a
foliation $\F$ on $X$. In this case, we obtain the following
inequality in terms of invariants of $X$, the virtual genus of $D$
and the degree of $\F$.

Let $D$ be a divisor on algebraic surface $X$. For the next result
we shall use the following notation
$h^i(D)=dim_{\mathbb{C}}H^i(X,\mathscr{O}(D))$, $i=0,1.$
\begin{cor}\label{cor}
Let $\F$ be a  foliation on algebraic surface $X$ and $D$ a divisor
$\F$-invariant. If $\F$ does not admit rational first integral, then
$$
2-2g(X,D)\leq
2h^1(D)-2h^0(K-D)+2\frac{(deg(\F)-deg(M))}{deg(D)}\cdot\displaystyle{h^0(D)\choose
2}+
$$
$$
 +\frac{K\cdot(K-12D)+\chi(X)}{6}-2\mathscr{N}(\F,|D|),
$$
where $g(X,D)$ is the virtual genus of $D$, $K$ is the canonical
sheave of $X$ and $\chi(X)$ is the Euler characteristic of $X$.
\end{cor}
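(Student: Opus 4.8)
The plan is to combine the inequality of Theorem~\ref{teo} with the Riemann--Roch and Noether formulas on the surface $X$, converting the abstract bound on $\mathscr{N}(\F,|D|)$ into the stated geometric inequality. First I would recall the adjunction formula, which defines the virtual genus of $D$ by $2g(X,D)-2 = D\cdot(D+K)$, so that the left-hand side of the claimed inequality is simply
\begin{equation*}
2-2g(X,D) = -D^2 - K\cdot D.
\end{equation*}

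Next I would invoke Riemann--Roch for the divisor $D$ on the surface $X$, namely $\chi(\mathscr{O}(D)) = \chi(\mathscr{O}_X) + \tfrac12 D\cdot(D-K)$, together with Noether's formula $\chi(\mathscr{O}_X) = \tfrac{1}{12}(K^2+\chi(X))$ and Serre duality $h^2(D)=h^0(K-D)$. Writing $\chi(\mathscr{O}(D)) = h^0(D)-h^1(D)+h^0(K-D)$ and solving for $h^0(D)$ gives
\begin{equation*}
h^0(D) = h^1(D) - h^0(K-D) + \tfrac{1}{12}(K^2+\chi(X)) + \tfrac12(D^2 - K\cdot D).
\end{equation*}
This is the identity that, after clearing denominators, will produce the term $\tfrac{1}{6}\bigl(K\cdot(K-12D)+\chi(X)\bigr)$ appearing in the statement.

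Then I would apply Theorem~\ref{teo}. Dividing its conclusion by the positive integer $\deg(D)$ (positive since $D$ is a nonzero effective invariant divisor) yields
\begin{equation*}
\mathscr{N}(\F,|D|) \le h^0(D) + \frac{\deg(\F)-\deg(M)}{\deg(D)}\binom{h^0(D)}{2},
\end{equation*}
hence, multiplying by $-2$,
\begin{equation*}
-2\mathscr{N}(\F,|D|) \ge -2h^0(D) - 2\frac{\deg(\F)-\deg(M)}{\deg(D)}\binom{h^0(D)}{2}.
\end{equation*}
Substituting this lower bound into the right-hand side of the asserted inequality cancels the two copies of the degree term, leaving $2h^1(D)-2h^0(K-D)-2h^0(D) + \tfrac{1}{6}(K^2+\chi(X)) - 2K\cdot D$. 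Inserting the Riemann--Roch expression for $h^0(D)$ obtained above, the terms $2h^1(D)$, $2h^0(K-D)$ and $\tfrac16(K^2+\chi(X))$ all cancel, and what remains is exactly $-D^2-K\cdot D = 2-2g(X,D)$, which establishes the inequality.

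The argument is essentially a bookkeeping computation, so the main point requiring care is the correct tracking of signs and the legitimacy of dividing the inequality of Theorem~\ref{teo} by $\deg(D)$, which is ensured by $\deg(D)>0$. No genuinely new geometric input beyond Theorem~\ref{teo}, adjunction, Riemann--Roch and Noether's formula is needed; the only subtlety is that the bound on $\mathscr{N}(\F,|D|)$ must be used as a lower bound on $-2\mathscr{N}(\F,|D|)$, so the direction of the inequality has to be monitored carefully through the substitution.
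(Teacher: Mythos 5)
Your proposal is correct and takes essentially the same route as the paper: both derive $h^0(D)=h^1(D)-h^0(K-D)+\tfrac{1}{12}\left(K^2+\chi(X)\right)+\tfrac12 D\cdot(D-K)$ from Riemann--Roch, Serre duality and Noether's formula, and then combine this with the inequality of Theorem~\ref{teo} divided by $\deg(D)>0$. The only cosmetic difference is that you expand the virtual genus into intersection numbers via adjunction at the outset and verify the cancellation at the end, whereas the paper carries $g(X,D)$ symbolically through the computation.
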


\begin{teo}\label{gen}
Let $\F$ be a foliation on $\mathbb{P}_{\mathbb{C}}^2$, of degree
$d\geq2$, that does not admit rational first integral of degree
$\leq k$. Let $D$ be a algebraic curve, of degree $k$, invariant by
$\F$, then
$$
2-2g(D)\leq\frac{d(k^3+6k^2+11k+6)-k^3-6k^2+13k+2}{4}-2\mathscr{N}(d,k),
$$
where $g(D)$ is the virtual genus of $D$ and $\mathscr{N}(d,k)$ is
the number of curve $\F$-invariant of degree $k$.
\end{teo}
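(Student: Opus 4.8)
The plan is to specialize Corollary \ref{cor} to the case $X = \mathbb{P}_{\mathbb{C}}^2$ and compute all the cohomological invariants explicitly for a plane curve $D$ of degree $k$. The key observation is that on $\mathbb{P}^2$ essentially everything is known: the canonical divisor is $K = \mathscr{O}(-3)$, the Euler characteristic is $\chi(\mathbb{P}^2) = 3$, and the "degree of $M$" (the normal-bundle twist of the foliation) should drop out in a way that converts $\deg(\mathcal{F})$ into the foliation degree $d$ in the usual projective normalization, giving $\deg(\mathcal{F}) - \deg(M) = d - 1$.

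First I would substitute $D = k H$ (where $H$ is the hyperplane class, so $\deg(D) = k$ and $D^2 = k^2$, $K \cdot D = -3k$) into the right-hand side of Corollary \ref{cor}. I would compute the virtual-genus term by recalling that $g(X,D)$ for a plane curve of degree $k$ is the arithmetic genus $\binom{k-1}{2} = \frac{(k-1)(k-2)}{2}$, so that $2 - 2g(X,D) = 2 - (k-1)(k-2)$; this matches the left-hand side $2 - 2g(D)$ of the target inequality. Next I would evaluate $h^0(K - D) = h^0(\mathbb{P}^2, \mathscr{O}(-3-k))$, which vanishes since $-3-k < 0$, killing that term entirely. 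The term $h^1(D) = h^1(\mathbb{P}^2, \mathscr{O}(k))$ also vanishes for $k \geq 0$ by the standard cohomology of line bundles on projective space, so the $2h^1(D)$ term disappears as well.

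The main computational core is the middle term $\frac{K \cdot (K - 12D) + \chi(X)}{6}$ together with $2\frac{\deg(\mathcal{F}) - \deg(M)}{\deg(D)}\binom{h^0(D)}{2}$. Here $h^0(D) = h^0(\mathbb{P}^2, \mathscr{O}(k)) = \binom{k+2}{2} = \frac{(k+1)(k+2)}{2}$, and with $\deg(\mathcal{F}) - \deg(M) = d - 1$ and $\deg(D) = k$, the foliation term becomes $\frac{2(d-1)}{k}\binom{(k+1)(k+2)/2}{2}$. Expanding $\binom{(k+1)(k+2)/2}{2}$ and dividing by $k$ should produce the cubic polynomial in $k$ with the factor $(d-1)$ that generates the $d(k^3 + 6k^2 + 11k + 6)$ piece in the numerator — indeed $(k+1)(k+2)(k+3) = k^3 + 6k^2 + 11k + 6$ is exactly the expected factorization, which is the tell-tale sign that the bookkeeping is correct. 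Meanwhile $K \cdot (K - 12D) = (-3H)\cdot(-3H - 12kH) = 9 + 36k$ and $\chi(\mathbb{P}^2) = 3$, so the topological term contributes $\frac{36k + 12}{6} = 6k + 2$.

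The hard part will be the careful algebraic bookkeeping: I must assemble the surviving terms, clear the common denominator, and verify that the $-2\mathscr{N}(\mathcal{F},|D|)$ term on the right of Corollary \ref{cor} matches the $-2\mathscr{N}(d,k)$ term in the target (this is immediate once one identifies $\mathscr{N}(\mathcal{F},|D|) = \mathscr{N}(d,k)$ for hypersurfaces of degree $k$). The delicate point is confirming that combining the cubic foliation term $\frac{2(d-1)}{k}\cdot\frac{1}{8}(k+1)(k+2)((k+1)(k+2)-2)$ with the linear term $6k+2$ collapses precisely into $\frac{d(k^3+6k^2+11k+6) - k^3 - 6k^2 + 13k + 2}{4}$. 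I would double-check this by expanding both sides as polynomials in $k$ (treating $d$ as a formal parameter) and matching coefficients; the $d$-dependent coefficients must reproduce $\frac{1}{4}(k^3 + 6k^2 + 11k + 6)$ and the $d$-independent remainder must reproduce $\frac{1}{4}(-k^3 - 6k^2 + 13k + 2)$. Since the only genuinely nontrivial input is the structural Corollary \ref{cor}, and everything else is the standard cohomology of $\mathscr{O}(k)$ on $\mathbb{P}^2$, this reduces entirely to a verification that the polynomial identity holds.
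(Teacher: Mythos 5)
Your proposal is correct and follows essentially the same route as the paper: both specialize Corollary \ref{cor} to $\mathbb{P}^2_{\mathbb{C}}$ with $K=-3h$, $D=kh$, kill the terms $h^0(K-D)$ (negative degree) and $h^1(\mathbb{P}^2,\mathscr{O}(k))$ (standard vanishing, which the paper derives via Kodaira--Nakano), compute $\frac{K\cdot(K-12D)+\chi(X)}{6}=6k+2$, and combine with the foliation term $\frac{(d-1)(k+1)(k+2)(k+3)}{4}$ to obtain the stated polynomial. The only cosmetic difference is that you invoke the standard cohomology of $\mathscr{O}(k)$ directly where the paper routes the $h^1$ vanishing through Serre-type rewriting and Kodaira--Nakano, which changes nothing of substance.
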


\begin{proof}
 Since
$\chi(\mathbb{P}_{\mathbb{C}}^2)=3$, $K=-3h$ and $D=kh$, where $h$
is the hyperplane class of $\mathbb{P}_{\mathbb{C}}^2$, follows that
$$
\frac{K\cdot(K-12D)+\chi(X)}{6}=6k+2.
$$
However, $K-D=-(3+k)h$ and  so $deg(K-D)=-3-k<0$, hence follows from
theorem \ref{posi} that $h^0(K-D)=0$. Moreover
$h^1(\mathbb{P}_{\mathbb{C}}^2,kh)=0$. In fact, we have that
$$
H^1(\mathbb{P}_{\mathbb{C}}^2,kh)=H^1(\mathbb{P}_{\mathbb{C}}^2,\Omega_{\mathbb{P}_{\mathbb{C}}^2}^2(kh-K))=
H^1(\mathbb{P}_{\mathbb{C}}^2,\Omega_{\mathbb{P}_{\mathbb{C}}^2}^2((k+3)h)),
$$
and applying the Kodaira-Nakano Vanishing Theorem that for $q=1$ and
$p=2$ we get
$$H^1(\mathbb{P}_{\mathbb{C}}^2,\Omega_{\mathbb{P}_{\mathbb{C}}^2}^2((k+3)h))=0.$$
Therefore, since $\F$ does not admit rational first integral of
degree $\leq k$, follows from corollary \ref{cor} and of the done
calculations that
$$
2-2g(D)\leq\frac{d(k^3+6k^2+11k+6)-k^3-6k^2+13k+2}{4}-2\mathscr{N}(d,k)
$$
\end{proof}

\begin{exe}
Let $C$ be a smooth curve invariant by a foliation $\F$ on
$\mathbb{P}_{\mathbb{C}}^2$. Under the conditions of theorem
\ref{gen} we get
$$
\chi(C)\leq
\frac{d(k^3+6k^2+11k+6)-k^3-6k^2+13k+2}{4}-2\mathscr{N}(d,k)
$$
where $\chi(C)$ is the Euler characteristic of $C$.
\end{exe}

Follows from theorem \ref{gen} that there exist a number
$\mathscr{G}(d,k)$, such that if $\F$ possess a invariant curve $C$,
of degree $k$, which satisfies the following condition
$$
g(C)<\mathscr{G}(d,k),
$$
then $\F$ admit rational first integral of degree $\leq k$.

\section{The degree of divisors and holomorphic foliations}
Let $(X,\varpi)$ be a Kähler manifold where $\varpi$ is the Kähler
form. The degree of holomorphic vector bundle $\mathrm{E}$ on $X$
related to structure induced be  $\varpi$ is defined by
$$
deg_{\varpi}(\mathrm{E})= \int\limits_{X}c_1(\mathrm{E})\wedge
\varpi^{n-1}.
$$

\begin{teo}\label{posi}\cite{K}
Let $L$ be a line bundle on Kahler manifold $(X,\varpi)$. Then:
\begin{itemize}

  \item [i)] If $deg_{\varpi}(L)<0$, then $H^{0}(X,\mathcal{O}(L))=\{0\}$.

  \item [ii)] If $deg_{\varpi}(L)=0$ and $s\in H^{0}(X,\mathcal{O}(L))$,
  with $s\neq0$, then $s(p)\neq 0$ for all $p\in X$.

\end{itemize}

\end{teo}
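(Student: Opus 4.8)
The plan is to extract all the needed positivity from a single nonzero holomorphic section by relating its zero divisor to $c_1(L)$ through the Poincar\'e--Lelong formula; both assertions then fall out of one integral identity.

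First I would fix a Hermitian metric $h$ on $L$, normalized so that the first Chern form $c_1(L,h)$ represents $c_1(L)$ in de Rham cohomology. Given a nonzero section $s\in H^0(X,\mathcal{O}(L))$, its zero locus $Z(s)$ is an effective divisor, and the Poincar\'e--Lelong formula provides the current identity
$$
\frac{i}{2\pi}\,\partial\bar\partial\log|s|_h^2=[Z(s)]-c_1(L,h),
$$
where $[Z(s)]$ denotes the current of integration over $Z(s)$. Next I would wedge with $\varpi^{n-1}$ and integrate over the compact manifold $X$. Since $\varpi$ is K\"ahler, $\varpi^{n-1}$ is closed; being of pure bidegree $(n-1,n-1)$ it satisfies $\partial\varpi^{n-1}=\bar\partial\varpi^{n-1}=0$, hence $\partial\bar\partial\varpi^{n-1}=0$. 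Treating $\log|s|_h^2$ as a globally integrable function and integrating by parts for currents, the left-hand term drops out,
$$
\int_X\frac{i}{2\pi}\,\partial\bar\partial\log|s|_h^2\wedge\varpi^{n-1}=\int_X\log|s|_h^2\cdot\frac{i}{2\pi}\,\partial\bar\partial\varpi^{n-1}=0,
$$
so that integrating the Poincar\'e--Lelong identity yields
$$
0=\int_{Z(s)}\varpi^{n-1}-\int_X c_1(L)\wedge\varpi^{n-1},\qquad\text{i.e.}\qquad deg_{\varpi}(L)=\int_{Z(s)}\varpi^{n-1}.
$$

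The decisive point is positivity. Because $Z(s)$ is a complex analytic hypersurface, Wirtinger's theorem shows that $\varpi^{n-1}$ restricts to a nonnegative volume form on its smooth locus, so $\int_{Z(s)}\varpi^{n-1}\ge 0$, with equality precisely when $Z(s)=\emptyset$. For (i), if $deg_{\varpi}(L)<0$ then no nonzero $s$ can exist, since any such would force the right-hand side to be nonnegative; hence $H^0(X,\mathcal{O}(L))=\{0\}$. For (ii), if $deg_{\varpi}(L)=0$ and $s\neq0$, then $\int_{Z(s)}\varpi^{n-1}=0$ forces $Z(s)=\emptyset$, so $s$ vanishes nowhere, i.e. $s(p)\neq0$ for every $p\in X$.

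The main obstacle is making the current computation rigorous: $\log|s|_h^2$ has logarithmic poles along $Z(s)$, so both the vanishing of the $\partial\bar\partial$-term and the appearance of $[Z(s)]$ must be justified within the theory of currents (the distributional pairing $\langle\partial\bar\partial T,\varpi^{n-1}\rangle=\langle T,\partial\bar\partial\varpi^{n-1}\rangle$) rather than by a naive application of Stokes' theorem. Once the Poincar\'e--Lelong formula and this integration-by-parts are granted, the remaining steps are routine.
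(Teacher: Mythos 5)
Your proof is correct, but there is nothing in the paper itself to compare it against: Theorem \ref{posi} is stated with a citation and no argument, being quoted from Kobayashi \cite{K}. The honest comparison is therefore with Kobayashi's proof, and that proof takes a genuinely different route: the Bochner technique rather than currents. There one first normalizes the Hermitian metric on $L$ (by solving a Laplace equation via Hodge theory on the compact K\"ahler manifold) so that the mean curvature, i.e. the trace of the curvature form against $\varpi$, is a constant $c$ of the same sign as $deg_{\varpi}(L)$; the Weitzenb\"ock identity for a holomorphic section $s$ then gives, up to positive factors, $\Delta|s|_h^2=|\nabla s|_h^2-c\,|s|_h^2$, and integration over the compact manifold forces $s\equiv 0$ when $c<0$, and $\nabla s\equiv 0$ when $c=0$, in which case $|s|_h$ is constant and a nonzero section never vanishes. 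Your Poincar\'e--Lelong argument instead produces the single identity $deg_{\varpi}(L)=\int_{Z(s)}\varpi^{n-1}$, from which both items follow at once by Wirtinger positivity. Your route is more transparent geometrically, since it exhibits the degree as the $\varpi$-volume of the zero divisor (with multiplicities), and it makes item (ii) immediate; what it costs is exactly the current-theoretic machinery you flag at the end (local integrability of $\log|s|_h^2$, Lelong's formula, and the distributional integration by parts against $\partial\bar\partial\varpi^{n-1}=0$). What Kobayashi's method buys is that it stays entirely within smooth objects plus the maximum principle, and it generalizes to Hermitian vector bundles of arbitrary rank with seminegative mean curvature, whereas the Poincar\'e--Lelong argument is specific to line bundles.

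One point worth making explicit in your write-up: both your argument and the statement itself implicitly assume $X$ compact and connected. Compactness is needed for the degree integral and for the integration by parts; connectedness is needed because on a disconnected $X$ item (i) is simply false (take $L$ of negative degree on one component and sufficiently positive degree on another), and correspondingly $\log|s|_h^2$ fails to be locally integrable when $s$ vanishes identically on a component. These hypotheses are tacit in \cite{K} and in the paper, but your proof is the place where they are actually used, so they should be stated.
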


\begin{defi}
Let $D$ be a effective divisor on  $X$. The degree of $D$ is defined
by $deg(\mathscr{O}(D))$.
\end{defi}

\begin{obs}
Since $D$ is effective we have that
$H^{0}(X,\mathscr{O}(D))\neq\{0\}$, and follows from theorem
\ref{posi} that $deg(\mathscr{O}(D))>0$.
\end{obs}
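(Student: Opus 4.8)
The plan is to unwind the definition of an effective divisor into a concrete nonzero global section, and then feed that section into Theorem \ref{posi} to exclude nonpositive degree; throughout I take $D$ to be a nonzero effective divisor, which is the setting of the paper, since $D=0$ would give $\mathscr{O}(D)=\mathscr{O}_X$ of degree zero. First I would recall that $D$ effective means $D=\sum_i n_i D_i$ with all multiplicities $n_i\geq 0$ and not all zero, the $D_i$ being the prime components. The line bundle $\mathscr{O}(D)$ then carries a canonical holomorphic section $s_D\in H^0(X,\mathscr{O}(D))$ whose zero divisor is exactly $(s_D)=D$. Since $D$ is nonzero, $s_D$ is not identically zero, and this already yields the first assertion $H^0(X,\mathscr{O}(D))\neq\{0\}$.

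Next I would obtain positivity of the degree by eliminating both nonpositive possibilities against Theorem \ref{posi}, where $deg=deg_{\varpi}$ is understood with respect to the fixed K\"ahler form $\varpi$. If $deg(\mathscr{O}(D))<0$, then part (i) of that theorem forces $H^0(X,\mathscr{O}(D))=\{0\}$, contradicting the existence of the nonzero section $s_D$. If instead $deg(\mathscr{O}(D))=0$, then part (ii) applied to $s_D$ gives $s_D(p)\neq 0$ for every $p\in X$, whence the zero divisor $(s_D)=D$ would be empty, contradicting $D\neq 0$. Having ruled out both $deg<0$ and $deg=0$, I conclude $deg(\mathscr{O}(D))>0$.

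The single point needing care --- rather than a genuine obstacle --- is to make explicit that the canonical section $s_D$ vanishes precisely along the support of $D$, so that the hypothesis $D\neq 0$ indeed forces $s_D$ to have an actual zero; this is exactly what defeats the borderline case $deg=0$ through part (ii). Beyond this, no further input is required: the whole argument is a short case analysis resting on the defining property of the section attached to an effective divisor together with Theorem \ref{posi}.
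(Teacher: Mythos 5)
Your proof is correct and follows exactly the paper's intended argument: the canonical section $s_D$ of $\mathscr{O}(D)$ shows $H^{0}(X,\mathscr{O}(D))\neq\{0\}$, part (i) of Theorem \ref{posi} rules out negative degree, and part (ii) applied to $s_D$ (which vanishes along $D$) rules out degree zero. Your explicit handling of the trivial case $D=0$ is a small but welcome precision that the paper leaves implicit.
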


Let $D$ be a divisor on  $X$ defined locally by functions
$\{f_{\alpha}\in \mathscr{O}(\mathcal{U}_{\alpha})\}_{\in\Lambda}$,
where $\{\mathcal{U}_{\alpha}\}_{\in\Lambda}$ is a open covering of
$X$.  If
$\mathcal{U}_{\alpha\beta}:=\mathcal{U}_{\alpha}\cap\mathcal{U}_{\beta}$
then there exist $f_{\alpha\beta}\in
\mathscr{O}^*(\mathcal{U}_{\alpha})$, such that
$f_{\alpha}=f_{\alpha\beta}f_{\beta}$. Denote by $f_{\alpha}^{D}$
the restriction of $f_{\alpha}$ on $D$. Let $\F$ be a holomorphic
foliation given by collections
$(\{\vartheta_{\alpha}\};\{\mathcal{U}_{\alpha}\};
\{g_{\alpha\beta}\in \mathcal{O}_{_{\mathcal{U}_{\alpha}}}^*
;\})_{\alpha \in \Lambda}$ on $X$. Consider the following functions
$$\zeta_{\alpha}^{(\F,D)}=\vartheta_{\alpha}(f_{\alpha}^{D})\in \mathscr{O}(\mathcal{U}_{\alpha}\cap D).$$
If $\mathcal{U}_{\alpha}\cap\mathcal{U}_{\beta}\cap D\neq\emptyset$
and using the Leibniz's rule we get
$\zeta_{\alpha}^{(\F,D)}=f_{\alpha\beta}^{D}g_{\alpha\beta}\zeta_{\beta}^{(\F,D)}$.
With this , we obtain a global section $\zeta^{(\F,D)}$ of line
bundle $(T_{\F^*} \otimes [D])_{\mid D}$. The \emph{tangency
varietie} of $\F$
  with $D$ is given by
  $$
\mathcal{T}(\F,D)=\{p\in D; \zeta^{(\F,D)}(p)=0\}.
  $$
\begin{defi}
Let $X\subset \mathbb{P}^{N}$ be a smooth algebraic variety and $H$
the hyperplane class of $\mathbb{P}^{N}$. Let $\F$ be a foliation on
$X$. The degree of $\F$ is the intersection number
$$
deg(\F):=\langle[\mathcal{T}(\F,H)]\smile [H]^{(n-2)},[H]\rangle,
$$
where $[H]^{(n-2)}=\underbrace{[H]\smile\cdots\smile
[H]}_{(n-2)-times}.$
\end{defi}

\begin{prop}\label{teorema}
Let $\F$ be a foliation on algebraic variety $X\subset\mathbb{P}^N$.
Then
$$
deg(\F)=deg(T_{\F}^*)+deg(X),
$$
where $deg(X)$ is the degree of $X$.
\end{prop}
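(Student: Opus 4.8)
The plan is to identify the cohomology class of the tangency variety with a first Chern class and then reduce an intersection number on a hyperplane section to one on $X$. Let $n=\dim X$, and let $h$ denote the restriction to $X$ of the hyperplane class of $\mathbb{P}^N$. Taking the invariant divisor in the preceding construction to be a smooth hyperplane section $D=H$ (which exists by Bertini), the local functions $\zeta_{\alpha}^{(\F,H)}$ glue, via the relation $\zeta_{\alpha}^{(\F,H)}=f_{\alpha\beta}^{H}g_{\alpha\beta}\zeta_{\beta}^{(\F,H)}$ noted in the text, into a global section $\zeta^{(\F,H)}$ of the line bundle $(T_{\F}^{*}\otimes \mathscr{O}(H))|_{H}$ whose zero locus is exactly $\mathcal{T}(\F,H)$. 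Hence, as a divisor class on the smooth $(n-1)$-dimensional variety $H$,
$$[\mathcal{T}(\F,H)] = c_1\big((T_{\F}^{*}\otimes \mathscr{O}(H))|_{H}\big) = \big(c_1(T_{\F}^{*}) + h\big)|_{H}.$$

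First I would substitute this expression into the definition of $deg(\F)$, obtaining
$$deg(\F) = \langle [\mathcal{T}(\F,H)] \smile [H]^{(n-2)}, [H]\rangle = \int_{H} \big(c_1(T_{\F}^{*}) + h\big)|_{H} \smile (h|_{H})^{n-2}.$$
Next, since the smooth divisor $H$ represents the class $h$, integration over the hyperplane section converts to integration over $X$ against $h$; concretely, by the projection formula $\int_{H}\beta|_{H} = \int_{X}\beta\smile h$ for every class $\beta$ on $X$. Applying this with $\beta = (c_1(T_{\F}^{*})+h)\smile h^{n-2}$ yields
$$deg(\F) = \int_{X}\big(c_1(T_{\F}^{*}) + h\big)\smile h^{n-1} = \int_{X} c_1(T_{\F}^{*})\smile h^{n-1} + \int_{X} h^{n}.$$

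Finally I would identify the two terms. Choosing the Kähler form $\varpi$ to be the restriction of the Fubini--Study form, whose class is $h$, the first term is by definition $deg_{\varpi}(T_{\F}^{*}) = deg(T_{\F}^{*})$, the degree of the conormal bundle introduced at the start of the section. The second term $\int_{X}h^{n}$ is the number of intersection points of $X$ with a generic linear subspace of complementary dimension, that is, $deg(X)$. Combining these gives $deg(\F)=deg(T_{\F}^{*})+deg(X)$, as claimed.

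The step I expect to be the main obstacle is the first one: justifying that the zero scheme of $\zeta^{(\F,H)}$ genuinely represents $c_1\big((T_{\F}^{*}\otimes \mathscr{O}(H))|_{H}\big)$. This requires choosing $H$ sufficiently general so that $\zeta^{(\F,H)}$ vanishes in the expected codimension one on $H$, with $\mathcal{T}(\F,H)$ carrying no excess or embedded components, and verifying that the cocycle relation above exhibits $\{\zeta_{\alpha}^{(\F,H)}\}$ as a section of precisely this bundle and not of some twist. Once this identification is secure, the remaining steps are routine intersection-theoretic bookkeeping.
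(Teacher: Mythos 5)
Your proposal is correct and follows essentially the same route as the paper's proof: both identify $[\mathcal{T}(\F,H)]$ with $c_1\bigl((T_{\F}^{*}\otimes[H])_{|H}\bigr)$ via the section $\zeta^{(\F,H)}$ constructed in Section 2, expand the Chern class, push the integral from $H$ down to $X$ using the fact that $H$ is Poincar\'e dual to $h$, and read off the two terms as $deg(T_{\F}^{*})$ and $deg(X)$. The only cosmetic difference is that the paper compresses your first step into a single appeal to the ``adjunction formula'' rather than re-deriving the gluing of the local sections.
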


\begin{proof}
 We have that $$\langle[\mathcal{T}(\F,H)]\smile
[H]^{(n-2)},[H]\rangle=\displaystyle\int\limits_{H}c_1([\mathcal{T}(\F,H)])\wedge
  h^{n-2},
$$
where $h$ is the hyperplane class. By adjunction formula
$[\mathcal{T}(\F,H)]=(T_{\F^*} \otimes [H])_{\mid H}$ and since $H$
is Poincaré's dual of $h$, that is $c_1\left([H]\right)=h$, we get
the following
$$
\begin{array}{ccl}
   deg(\F)&=&\langle[\mathcal{T}(\F,\mathcal{H})]\smile
[H]^{(n-2)},[H]\rangle=\displaystyle\int_{H}c_1(T_{\F^*} \otimes
[H])_{\mid
H})\wedge h^{n-2}\\
  \\
   &= &\displaystyle\int_{H}c_1\left(T_{\F_{|_{H}}}^*
 \right)\wedge h^{n-2}+\int_{H}c_1\left([H]\right)\wedge h^{n-2}\\
 \\
   & =&\displaystyle\int_{X}c_1\left(T_{\F}^*
 \right)\wedge h^{n-1}+\int_{X}h \wedge h^{n-1}\\
 \\
  & =&deg\left( T_{\F}^*\right)+deg(X).
\end{array}
$$

\end{proof}

\begin{obs}
If $deg\left( T_{\F}^*\right)<0$ follows from theorem \ref{posi}
that $H^0(X,T_{\F}^*)=\{0\}$. Therefore we shall assume $deg\left(
T_{\F}^*\right)> 0$, or equivalently $deg(\F)-deg(X)>0.$

\end{obs}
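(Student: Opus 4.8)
The plan is to treat the statement as an immediate corollary of the two results just established, Theorem \ref{posi} and Proposition \ref{teorema}, and to make the two short deductions explicit. First I would record the structural point that makes Theorem \ref{posi} applicable: because $\F$ is one-dimensional, its tangent sheaf $T_{\F}$ is invertible, so $T_{\F}^*$ is a line bundle on the K\"ahler manifold $(X,\varpi)$ and the hypotheses of Theorem \ref{posi} are met with $L = T_{\F}^*$.

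With that identification the first assertion is instantaneous: applying part (i) of Theorem \ref{posi} to $L = T_{\F}^*$, the inequality $deg_{\varpi}(T_{\F}^*) < 0$ yields $H^0(X, T_{\F}^*) = \{0\}$. I would then explain why this case is discarded rather than studied: the right-hand sides of Theorem \ref{teo} and Corollary \ref{poin} contain the factor $deg(\F) - deg(X)$, so a negative value of $deg(T_{\F}^*)$ renders those bounds vacuous or of the wrong sign; hence the only interesting regime is $deg(T_{\F}^*) > 0$.

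For the equivalence in the last clause I would simply substitute the identity of Proposition \ref{teorema}, namely $deg(\F) = deg(T_{\F}^*) + deg(X)$, which rearranges to $deg(T_{\F}^*) = deg(\F) - deg(X)$. This shows that the conditions $deg(T_{\F}^*) > 0$ and $deg(\F) - deg(X) > 0$ coincide, so adopting either one as a standing hypothesis for the remainder of the paper is legitimate.

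The main point requiring care --- rather than a genuine obstacle --- is bookkeeping with the bundles and the chosen polarization. I would check that Theorem \ref{posi} is being applied to $T_{\F}^*$ itself, and not to $T_X \otimes T_{\F}^*$, the bundle whose global sections encode the inclusion $T_{\F} \hookrightarrow T_X$ defining $\F$; and that the degree in Proposition \ref{teorema} is the same $deg_{\varpi}$ appearing in Theorem \ref{posi}, computed against $\varpi$. Once these identifications are fixed the remark follows with no computation, and the borderline value $deg(T_{\F}^*) = 0$, governed by part (ii) of Theorem \ref{posi}, may be absorbed into the same normalization.
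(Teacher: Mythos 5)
Your proposal is correct and follows exactly the reasoning the paper intends for this remark: apply Theorem \ref{posi}(i) to the line bundle $L = T_{\F}^*$ to get $H^0(X,T_{\F}^*)=\{0\}$ when $deg(T_{\F}^*)<0$, and use the identity $deg(\F)=deg(T_{\F}^*)+deg(X)$ of Proposition \ref{teorema} for the stated equivalence. Your additional observations --- that the positivity assumption is what keeps the factor $deg(\F)-deg(X)$ in Theorem \ref{teo} meaningful, and that the borderline case $deg(T_{\F}^*)=0$ is governed by part (ii) of Theorem \ref{posi} --- are accurate glosses consistent with the paper.
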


\begin{exe}
Let $\F$  be a foliation on $X$,  where $Pic(X)\simeq\mathbb{Z}$.
Take a positive generator $\mathcal{H}$ for $Pic(X)$ and denote by
$\mathcal{O}_{X}(k):=\mathcal{H}^{\otimes k}$ the $k$-th tensorial
power of $\mathcal{H}$. If we shall write
$T_{\F}^*=\mathcal{O}_{X}(d-1)$ we get that
$deg(T_{\F}^*)=(d-1)deg(X)$ and hence
$$
deg(\F)=deg(T_{\F}^*)+deg(X)=(d-1)deg(X)+deg(X)=d\cdot deg(X).
$$
In the case where $X=\mathbb{P}^n$ we will have, as already it is
known, that  $deg(\F)=d$.
\end{exe}

\section{Extatic divisor}
The method adopted here stems from the work of J.V.Pereira [P],
where the notion of extactic variety is exploited. In this section
we  digress briefly on extactic varieties and their main properties.

A one-dimensional foliation $\F$ on complex manifold $X$ induced a
morphism $\Phi_{\F}: \Omega_{X}^1\rightarrow T_{\F}^*$ given by
locally by contraction , that is,
$\Phi_{\F_{|_{\mathcal{U}_{\alpha}}}}(\theta)=i_{\vartheta_{\alpha}}(\theta_{\alpha})$,
where $\mathcal{U}_{\alpha}$ is a opened of $X$.

Consider the linear system $H^{0}(X,\mathcal{O}(D))$ and  take a
open covering  $\{\mathcal{U}_{\alpha}\}_{\alpha}$ of $X$ which
trivialize  $\mathcal{O}(D)$ and $T_{\F}^*$. In the opened
$\mathcal{U}_{\alpha}$ we can consider the morphism
$$
T^{(k)}_{|_{\mathcal{U}_{\alpha}}}:H^{0}(X,\mathcal{O}(D))\otimes
\mathcal{O}_{\mathcal{U}_{\alpha}}\rightarrow
\mathcal{O}_{\mathcal{U}_{\alpha}}^{k}
$$
defined by
$$
T^{(k)}_{|_{\mathcal{U}_{\alpha}}}(s)=s+X_{\F}(s)^{\alpha}\cdot
t+X_{\F}^2(s)^{\alpha}\cdot
\frac{t^2}{2!}+\cdots+X_{\F}^k(s)^{\alpha}\cdot \frac{t^k}{k!},
$$
where
$X_{\F}(\cdot)^{\alpha}=\Phi_{\F}(d(\cdot))_{|_{\mathcal{U}_{\alpha}}}$
and $s\in H^{0}(X,\mathcal{O}(D))\otimes
\mathcal{O}_{\mathcal{U}_{\alpha}}.$ In an opedend
$\mathcal{U}_{\alpha}$ we have
$\mathcal{O}(D)_{|_{\mathcal{U}_{\alpha}}}=\mathcal{O}_{\mathcal{U}_{\alpha}}\cdot
\sigma_{\alpha}$ and
$T_{\F_{|_{\mathcal{U}_{\alpha}}}}^*=\mathcal{O}_{\mathcal{U}_{\alpha}}\cdot
\beta_{\alpha}$. Therefore, for all $s_{\alpha} \in
H^{0}(X,\mathcal{O}(D))\otimes \mathcal{O}_{\mathcal{U}_{\alpha}}$
we obtain
$$
\begin{array}{c}
  s_{\alpha}=s_{\alpha}^{(1)}\cdot \sigma_{\alpha} \\
  X_{\F}(s_{\alpha})^{\alpha}= X_{\F}(s_{\alpha}^{(1)})^{\alpha} \cdot
  \beta_{\alpha}=s_{\alpha}^{(2)}\cdot \beta_{\alpha}
  \\
  \vdots
  \\
  X_{\F}^{k-1}(s_{\alpha})^{\alpha}= X_{\F}(s_{\alpha}^{(k-2)})^{\alpha} \cdot
  \beta_{\alpha}=s_{\alpha}^{(k)}\cdot \beta_{\alpha}
\end{array}
$$
If $\mathcal{U}_{\alpha}\cap \mathcal{U}_{\gamma}\neq\emptyset$ then
$s_{\alpha}^{(1)}=g_{\alpha\gamma} s_{\gamma}^{(1)}$ and
$X_{\F}(\cdot)^{\alpha}=i_{\vartheta_{\alpha}}(\cdot)=i_{(f_{\alpha\gamma}\vartheta_{\gamma})}(\cdot)=
f_{\alpha\gamma}X_{\F}(\cdot)^{\gamma}$, where
$g_{\alpha\gamma},f_{\alpha\gamma} \in
\mathcal{O}^*(\mathcal{U}_{\alpha})$ are the cocycles which defines,
respectively, the line bundles  $[D]$ and $T_{\F}^*$. using the
described compatibility above and the Leibniz's rule we get
$$
\begin{array}{c}
  s_{\alpha}=s_{\alpha}^{(1)}\cdot \sigma_{\alpha}=g_{\alpha\beta} s_{\gamma}^{(1)}\cdot \sigma_{\alpha}\\
  X_{\F}(s_{\alpha})^{\alpha}= X_{\F}(s_{\alpha}^{(1)})^{\alpha} \cdot
  \beta_{\alpha}=(X_{\F}(g_{\alpha\gamma})^{\gamma}\cdot
  s_{\gamma}^{(1)}+g_{\alpha\gamma}\cdot s_{\gamma}^{(2)} )\cdot f_{\alpha\gamma}
  \cdot\beta_{\gamma}
\end{array}
$$
Following for this process it ties the order
$k=h^{0}(X,\mathcal{O}(D))$, we obtain
$$
\begin{array}{ccc}
  \left[
     \begin{array}{c}
       s_{\alpha}^{(1)}\\
       s_{\alpha}^{(2)} \\
       s_{\alpha}^{(3)} \\
       \vdots\\
        s_{\alpha}^{(k)}\\
     \end{array}
   \right]&=
   & \left[
       \begin{array}{ccccc}
         g_{\alpha\beta} & 0 & 0 & 0 & 0\\
          X_{\F}(g_{\alpha\gamma})^{\gamma}\cdot f_{\alpha\gamma}& g_{\alpha\beta}\cdot f_{\alpha\beta}& 0 & 0 & 0 \\
         \ddots & \ddots & g_{\alpha\beta}\cdot f_{\alpha\beta}^2& 0 & 0 \\
         \ddots & \ddots & \ddots & \ddots & 0 \\
         \ddots & \ddots & \ddots & \ddots & g_{\alpha\beta}\cdot f_{\alpha\beta}^{k-1} \\
       \end{array}
     \right]\cdot\left[
     \begin{array}{c}
       s_{\gamma}^{(1)}\\
       s_{\gamma}^{(2)} \\
       s_{\gamma}^{(3)} \\
       \vdots\\
        s_{\gamma}^{(k)}\\
     \end{array}
   \right]
\end{array}
$$
Denoting the matrix above by  $\Theta_{\alpha\gamma}(\F,D)\in
GL(k,\mathcal{O}^*_{\mathcal{U}_{\alpha\gamma}})$, we see that
$$
\left\{
\begin{array}{ll}
\Theta_{\alpha\gamma}(\F,D)(p)\cdot\Theta_{\gamma\alpha}(\F,D)(p)=I,\ \  $for all$ \ p\in \mathcal{U}_{\alpha}\cap \mathcal{U}_{\gamma}\\
\\
\Theta_{\alpha\gamma}(\F,D)(p)\cdot\Theta_{\gamma\lambda}(\F,D)(p)\cdot\Theta_{\lambda\alpha}(\F,D)(p)=I,\
\ $for all$
\ p\in \mathcal{U}_{\alpha}\cap \mathcal{U}_{\gamma}\cap \mathcal{U}_{\lambda}.\\
\end{array}
\right.
$$
That is, the family  of matrices
$\{\Theta_{\alpha\gamma}(\F,D)\}_{\alpha\gamma}$ define a cocycle of
a vector bundle of rank $k$ on $X$ that we shall denote by
$J_{\mathcal{X}_{\F}}^{k}\mathcal{O}(D)$. Now, using the
trivializations $\{\Theta_{\alpha\gamma}(\F,D)\}_{\alpha\gamma}$ we
get the morphisms
$$
T^{(k)}:H^{0}(X,\mathcal{O}(D))\otimes \mathcal{O}_{X}\rightarrow
J_{\mathcal{X}_{\F}}^{k}\mathcal{O}(D).
$$
Taking the determinant  of $T^{(k)}$ we have the morphism
\begin{center}
$det(T^{(k)}):\bigwedge^k\left[H^{0}(X,\mathcal{O}(D))\right]\otimes
\mathcal{O}_{X}\rightarrow
\bigwedge^kJ_{\mathcal{X}_{\F}}^{k}\mathcal{O}(D)$,
\end{center}
and tensorizing by $(\bigwedge^kV)^*$ we obtain a global section of
$\bigwedge^kJ_{\mathcal{X}_{\F}}^{k}\mathcal{O}(D)\otimes
(\bigwedge^kV)^*$ given by
\begin{center}
$\varepsilon_{_{(\F,V)}}: \mathcal{O}_{X}\rightarrow
\bigwedge^kJ_{\mathcal{X}_{\F}}^{k}\mathcal{O}(D)\otimes
(\bigwedge^kV)^*$.
\end{center}

\begin{defi}
The Extatic divisor  of $\F$ with respect to the linear system $
H^{0}(X,\mathcal{O}(D))$ is the divisor
$\mathcal{E}(\F,V)=\left(\varepsilon_{_{(\F,V)}}\right)$ given by
zeros of  the section  $$\varepsilon_{_{(\F,V)}} \in
H^{0}\left(X,\bigwedge^kJ_{\mathcal{X}_{\F}}^{k}\mathcal{O}(D)\otimes
(\bigwedge^kV)^*\right).$$
\end{defi}

J.V.Pereira [P] obtained the following results, which elucidate the
role of the divisor variety:

\begin{prop}([P], Proposition 5)\label{prop}Let $\F$ be a one-dimensional holomorphic foliation
on a complex manifold $X$. If $V$ is a finite dimensional linear
system, then every $\F$- invariant hypersurface which is contained
in the zero locus of some element of V , must be contained in the
zero locus of $\mathcal{E}(V,\F)$.
\end{prop}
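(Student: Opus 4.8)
The plan is to reduce the statement to a purely local divisibility computation and then check that it is compatible with the line bundle formalism set up above. Fix a trivialising open set $\mathcal{U}_{\alpha}$ on which both $\mathcal{O}(D)$ and $T_{\F}^{*}$ are trivial, and a basis $s_{1},\dots,s_{k}$ of $V=H^{0}(X,\mathcal{O}(D))$, where $k=h^{0}(X,\mathcal{O}(D))$. Over $\mathcal{U}_{\alpha}$, after trivialising $\mathcal{O}(D)$, each $s_{j}$ is represented by a holomorphic function, still denoted $s_{j}$, and by the construction of the morphism $T^{(k)}$ the section $\varepsilon_{(\F,V)}$ is represented by the wronskian-type determinant
$$
W_{\alpha}=\det\bigl[\,X_{\F}^{\,i-1}(s_{j})^{\alpha}\,\bigr]_{1\le i,j\le k},
$$
whose $j$-th column $C_{j}$ is formed by the entries $s_{j},\,X_{\F}(s_{j})^{\alpha},\dots,X_{\F}^{\,k-1}(s_{j})^{\alpha}$. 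Since the zero locus of $\mathcal{E}(\F,V)$ is $\{W_{\alpha}=0\}$ on $\mathcal{U}_{\alpha}$, it suffices to show that the local reduced equation $h$ of the given $\F$-invariant hypersurface $W_{0}$ divides $W_{\alpha}$.

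First I would translate the two hypotheses into divisibility relations. That $W_{0}=\{h=0\}$ is $\F$-invariant means exactly that the directional derivative of $h$ lies in the ideal $(h)$, i.e. $X_{\F}(h)^{\alpha}=g\,h$ for some $g\in\mathcal{O}(\mathcal{U}_{\alpha})$. That $W_{0}$ is contained in the zero locus of some $f\in V$ means $h\mid f$; writing $f=h\,q$ and applying Leibniz's rule gives
$$
X_{\F}(f)^{\alpha}=X_{\F}(h)^{\alpha}\,q+h\,X_{\F}(q)^{\alpha}=h\bigl(g\,q+X_{\F}(q)^{\alpha}\bigr).
$$
Hence $h\mid X_{\F}(f)^{\alpha}$, and an immediate induction shows $h\mid X_{\F}^{\,m}(f)^{\alpha}$ for all $m\ge 0$. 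Therefore every entry of the column $C_{f}$ built from $f$ by the same recipe is divisible by $h$.

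Next I would invoke the multilinearity of the determinant. Writing $f=\sum_{i}c_{i}s_{i}$ with $c_{i_{0}}\neq 0$ for some index $i_{0}$, linearity of $X_{\F}(\cdot)^{\alpha}$ gives $C_{f}=\sum_{i}c_{i}C_{i}$. Replacing the $i_{0}$-th column of $W_{\alpha}$ by $C_{f}$ and expanding by multilinearity, every term in which a column is repeated vanishes, so the resulting determinant equals $c_{i_{0}}\,W_{\alpha}$; on the other hand, expanding this same determinant along the column $C_{f}$ shows that it is divisible by $h$. Since $c_{i_{0}}$ is a nonzero constant we conclude $h\mid W_{\alpha}$, that is $W_{0}=\{h=0\}\subseteq\{W_{\alpha}=0\}$, which is the desired inclusion in the zero locus of $\mathcal{E}(\F,V)$.

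The step I expect to require the most care is verifying that this local divisibility is independent of the trivialisation, so that it genuinely describes the global section $\varepsilon_{(\F,V)}$. This rests on the shape of the transition matrices $\Theta_{\alpha\gamma}(\F,D)$ computed above: they are lower triangular with invertible diagonal entries of the form $g_{\alpha\gamma}f_{\alpha\gamma}^{\,r}\in\mathcal{O}^{*}$, so $\det\Theta_{\alpha\gamma}(\F,D)$ is a nowhere-vanishing holomorphic function and $W_{\alpha}=\det\Theta_{\alpha\gamma}(\F,D)\cdot W_{\gamma}$; thus divisibility by $h$ is preserved across overlaps and the conclusion is well defined globally. Finally, if $\varepsilon_{(\F,V)}\equiv 0$ then $\mathcal{E}(\F,V)$ is read as all of $X$ and the inclusion is trivial, so one may assume $\varepsilon_{(\F,V)}\not\equiv 0$, in which case the argument above yields a genuine inclusion of divisors.
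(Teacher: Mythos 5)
The paper itself gives no proof of this proposition: it is quoted verbatim from Pereira's paper (cited there as [P], Proposition 5), so there is no internal argument to compare against. Your proof is correct and is essentially the standard argument from the cited source: invariance ($X_{\F}(h)^{\alpha}\in(h)$) plus Leibniz gives $h\mid X_{\F}^{m}(f)^{\alpha}$ for all $m$, multilinearity of the Wronskian-type determinant converts the divisibility of the column $C_{f}$ into divisibility of the local extactic determinant, and the triangular transition matrices (whose determinants are units) make the conclusion independent of the trivialization, so nothing further is needed.
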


If $\F$ is a holomorphic one-dimensional foliation on a complex
manifold $X$, then a first integral for $\F$ is a holomorphic map
$\Theta:X\longrightarrow Y$, where $Y$ is a complex manifold, such
that the fibers of $\Theta$ are $\F$-invariant. Then we have:

\begin{teo}([P], Theorem 3)\label{jorge}. Let $\F$ be a one-dimensional holomorphic
foliation on a complex manifold $X$. If $V$ is a finite dimensional
linear system such that $\mathcal{E}(V,\F)$ vanishes identically,
then there exits an open and dense set $U$ where $\F_{|U}$ admits a
first integral. Moreover, if $X$ is a projective variety, then $\F$
admits a meromorphic first integral.
\end{teo}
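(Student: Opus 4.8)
The plan is to recognize the extactic section $\varepsilon_{(\F,V)}=\det(T^{(k)})$ constructed above as a generalized Wronskian, and then to invoke the classical characteristic-zero principle that such a Wronskian vanishes identically precisely when its entries are linearly dependent over the field of constants of the underlying derivation. Here the derivation is $X_{\F}(\cdot)=\Phi_{\F}(d(\cdot))$, and a function lies in $\ker X_{\F}$ exactly when it is a first integral of $\F$. Fixing a basis $s_1,\dots,s_k$ of $V=H^{0}(X,\mathscr{O}(D))$, in each trivializing chart the section $\varepsilon_{(\F,V)}$ is the determinant $W_k:=\det\big(X_{\F}^{i}(s_j)\big)_{0\le i\le k-1,\ 1\le j\le k}$, so the hypothesis $\mathcal{E}(V,\F)\equiv 0$ is exactly the statement $W_k\equiv 0$.

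First I would select, after reordering the basis, the least integer $m\le k$ for which the truncated Wronskian $W_m=\det\big(X_{\F}^{i}(s_j)\big)_{0\le i\le m-1,\ 1\le j\le m}$ vanishes identically while $W_{m-1}\not\equiv 0$; such an $m$ exists since $W_k\equiv 0$ and $W_1=s_1\not\equiv 0$. On the open dense set $U=X\setminus\{W_{m-1}=0\}$ the linear system formed by the first $m-1$ rows,
$$
X_{\F}^{i}(s_m)=\sum_{j=1}^{m-1}c_j\,X_{\F}^{i}(s_j),\qquad i=0,\dots,m-2,
$$
has invertible coefficient matrix, so Cramer's rule defines meromorphic functions $c_1,\dots,c_{m-1}$ on $U$.

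The heart of the argument is to show that each $c_j$ is a first integral, i.e. $X_{\F}(c_j)=0$. Writing the columns $v_j=\big(s_j,X_{\F}s_j,\dots,X_{\F}^{m-1}(s_j)\big)^{T}$, the first $m-1$ relations say $v_m-\sum_{j<m}c_j v_j=\delta\,e_m$, where $e_m$ is the last standard vector and $\delta=X_{\F}^{m-1}(s_m)-\sum_{j<m}c_j\,X_{\F}^{m-1}(s_j)$; multilinearity of the determinant then gives $W_m=\delta\cdot W_{m-1}$, so $W_m\equiv 0$ forces $\delta\equiv 0$ on $U$, supplying the missing relation also at the order $i=m-1$. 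Applying $X_{\F}$ to the relation of order $i$ and subtracting the relation of order $i+1$ yields the homogeneous system $\sum_{j=1}^{m-1}X_{\F}(c_j)\,X_{\F}^{i}(s_j)=0$ for $i=0,\dots,m-2$, whose matrix has nonzero determinant $W_{m-1}$; hence $X_{\F}(c_j)=0$ for every $j$. Thus $s_m=\sum_{j<m}c_j s_j$ with first-integral coefficients, and since $s_1,\dots,s_m$ are $\mathbb{C}$-linearly independent the $c_j$ cannot all be complex constants, so some $c_{j_0}$ is a nonconstant first integral of $\F$ on $U$; this is the open dense set and the first integral asserted in the first part.

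For the global statement, Cramer's rule exhibits each $c_j$ as a ratio of two determinants built from the global sections $X_{\F}^{i}(s_\ell)$, that is, as a quotient of global sections of line bundles on $X$, hence a meromorphic function on all of $X$. When $X$ is projective this nonconstant meromorphic first integral is exactly a rational map $\Theta:X\dashrightarrow\mathbb{P}^1$ whose fibres are $\F$-invariant, which is the meromorphic first integral claimed. The step I expect to be the main obstacle is the verification $X_{\F}(c_j)=0$: this is the nontrivial direction of the Wronskian criterion, and it is where the characteristic-zero hypothesis is indispensable, the remainder being linear algebra together with the passage from local Cramer quotients to global meromorphic functions.
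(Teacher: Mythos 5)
The paper offers no proof of this theorem to compare against: it is imported verbatim from Pereira's paper (cited as [P], Theorem 3 in \cite{V}), and the present article only uses it as a black box in the proof of Theorem \ref{teo}. Your argument is correct and is essentially Pereira's original one: you identify $\varepsilon_{(\F,V)}$ locally with the generalized Wronskian $\det\left(X_{\F}^{i}(s_j)\right)$ of a basis of $V$ relative to the derivation $X_{\F}$, take the minimal $m$ with $W_m\equiv 0$, use $W_m=\delta\cdot W_{m-1}$ and Cramer's rule to produce coefficients $c_j$ with $X_{\F}(c_j)=0$, and observe that linear independence of the $s_j$ over $\mathbb{C}$ forces some $c_{j_0}$ to be a nonconstant first integral, which is a global meromorphic (hence rational, when $X$ is projective) function because numerator and denominator of the Cramer quotient are sections of the same line bundle, transforming by the same cocycle factor.
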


\section{Proofs}

\subsection{Proof of theorem \ref{teo}
} From theorem  \ref{jorge} if $\F$ does not rational first integral
$\varepsilon_{_{(\F,V)}}\neq 0$, and then defines a divisor
$\mathcal{E}(\F,V)$ whose line bundle associated is
$\bigwedge^kJ_{\mathcal{X}_{\F}}^{k}\mathcal{O}(D)\otimes
(\bigwedge^kV)^*$. Let us say that $k=dim_{\mathbb{C}}V$. Let
$\mathscr{N}_i$ be the number of irreducible divisors  of
$H^{0}(X,\mathcal{O}(D))$ of degree  $i\leq deg(D)$, counting
multiplicities, invariants by $\F$. From proposition \ref{prop} all
divisor $\mathscr{D}  \in H^{0}(X,\mathcal{O}(D))$  invariant by
$\F$ is contained in the extatic  $\mathcal{E}(\F,V)$. Using this
fact we can to  affirm that
$$
\sum_{i=1}^{deg(D)}i\cdot\mathscr{N}_i\leq deg(\mathcal{E}(\F,V)).
$$
Indeed, it is enough to group the divisors $\F$-invariants of the
following form
$$
[\mathcal{E}(\F,V)]=[V_{1}^1]^{d_{_{11}}}\otimes\cdots \otimes
[V^{n_1}_1]^{d_{_{1n_1}}}\otimes \cdots
\otimes[V^{1}_{deg(D)}]^{d_{_{1deg(D)}}} \otimes\cdots
$$
$$
\cdots \otimes
[V^{n_{deg(D)}}_{deg(D)}]^{d_{_{n_{deg(D)deg(D)}}}}\otimes\mathcal{L},
$$
where $[V_{i}^j]$ is a divisor irreducible  invariant by $\F$, of
degree $i$ and  multiplicities  $d_{_{ij}}$, and $\mathcal{L}$ is a
line bundle. Therefore we get
$$
\sum_{k=1}^{n_i}d_{_{ki}}deg(V_{k}^i)=i\cdot\sum_{k=1}^{n_i}d_{_{ki}}=i\cdot\mathscr{N}_i,\
\ \forall \ i=1,\dots deg(D).
$$
For simplicity we will write
$[\mathcal{E}(\F,V)]=\mathfrak{I}_{\F}\otimes \mathcal{L}$, where
$$
\mathfrak{I}_{\F}=[V_{1}^1]^{d_{_{11}}}\otimes\cdots \otimes
[V^{n_1}_1]^{d_{_{1n_1}}}\otimes \cdots
\otimes[V^{1}_{deg(D)}]^{d_{_{1deg(D)}}} \otimes\cdots \otimes
[V^{n_{deg(D)}}_{deg(D)}]^{d_{_{n_{deg(D)deg(D)}}}}.
$$
Calculating the degree we conclude that
$deg(\mathfrak{I}_{\F})=\displaystyle
\sum_{i=1}^{deg(D)}i\cdot\mathscr{N}_i\leq deg(\mathcal{E}(\F,V))$.
This show the affirmed one above.
\\
\\
Fronm this inequality, we get the following  $
deg(D)\cdot\mathscr{N}(D)\leq deg(\mathcal{E}(\F,V)).$ However the
line bundle associated to the extatic divisor $\mathcal{E}(\F,V)$ is
given by the following
 $\bigwedge^kJ_{\mathcal{X}_{\F}}^{k}\mathcal{O}(D)\otimes
(\bigwedge^kV)^*.$ This implies that $$ [\mathcal{E}(\F,V)]\simeq
\bigwedge^kJ_{\mathcal{X}_{\F}}^{k}\mathcal{O}(D)\otimes
(\bigwedge^kV)^*.
$$
On the other hand, the cocycle of
$\bigwedge^kJ_{\mathcal{X}_{\F}}^{k}\mathcal{O}(D)$ is given by
$$det(\Theta_{\alpha\gamma}(\F,D))=g_{\alpha\beta}^k \cdot
f_{\alpha\beta}^{{k \choose 2}}, $$ where $g_{\alpha\beta}$ and
$f_{\alpha\beta}$ are trivializations of $[D]$ and $T_{\F}^*$,
respectively. This show that
$\bigwedge^kJ_{\mathcal{X}_{\F}}^{k}\mathcal{O}(D)\simeq
[D]^{\otimes k}\otimes (T_{\F}^*)^{\otimes {k \choose 2}},$ hence
\begin{center}
$[\mathcal{E}(\F,V)]= [D]^{\otimes k}\otimes (T_{\F}^*)^{\otimes {k
\choose 2}}\otimes (\bigwedge^k V)^*$.
\end{center}
Calculating the degree $deg(\mathcal{E}(\F,V))$ we get
$$deg(\mathcal{E}(\F,V))=deg\left([D]^{\otimes k}\otimes
(T_{\F}^*)^{\otimes {k \choose 2}}
\right)+\underbrace{deg\left(\bigwedge^k V^*
\right)}_{\stackrel{\shortparallel}{0} }=k\cdot
deg(D)+deg(T_{\F}^*){k \choose 2}.$$ Finalely, follows from
$\mathscr{N}_{deg(D)}\cdot deg(D)\leq\displaystyle
\sum_{i=1}^{deg(D)}i\cdot\mathscr{N}_i\leq deg(\mathcal{E}(\F,V))$
and of the fact that $deg(T_{\F}^*)=deg(\F)-deg(X)$, that
$$
deg(D)\cdot[\mathscr{N}_{deg(D)}-k] \leq [deg(\F)-deg(X)]\cdot{k
\choose 2}.
$$
This proof the theorem \ref{teo}.

\subsubsection{Proof of corollary \ref{cor}}
From Riemann-Roch's theorem (see \cite{H} theorem 1.6) we get
$$
h^0(D)=h^1(D)-h^0(K-D)+\frac{D\cdot(D-K)}{2}+\chi(\mathscr{O}_{X}),
$$
where $\chi(\mathscr{O}_{X})$ holomorphic Euler characteristic of
$X$ and $K$ is the canonical sheave. Since
$g(X,D)=\frac{D\cdot(D-K)}{2}+D\cdot K+1$ we have that
$$
h^0(D)=h^1(D)-h^0(K-D)+g(X,D)-D\cdot K+\chi(\mathscr{O}_{X})-1.
$$
From Noether's formula $\chi(\mathscr{O}_{X})=\frac{1}{12}(K\cdot
K+\chi(X))$  we get
$$
h^0(D)=h^1(D)-h^0(K-D)+g(X,D) +\frac{1}{12}[K\cdot(
K-12D)+\chi(X)]-1\ \ (*)
$$
Now, by theorem \ref{teo}  we have that
$$
\begin{array}{ccc}
 - h^0(D) & \leq& \left(\frac{deg(\F)-deg(X)}{deg(D)}\right)\cdot{h^0(D)
\choose 2}-\mathscr{N}(\F,|D|) \\
\end{array}
$$
The result follows from this inequality and $(*)$.
\\
\\
\textbf{Acknowledgement}:
\\
\\
I would like to be thankful to Marcio G. Soares for to guide my work
and Rodrigo Bissacot for interesting conversations.

{\footnotesize
}

\end{document}